\title{
Global attractivity in almost periodic single species models
     }
\author
{Y.M.Myslo
\and
V.I.Tkachenko
\thanks {Institute of Mathematics National Academy of Sciences,
Kiev, Ukraine}
}
\begin{document}

\maketitle

\begin{abstract}
Using properties of asymptotically almost periodic solutions we prove existence theorem for piece-wise continuous almost
periodic solutions of differential equations with delay and impulses. We apply these results to study almost periodic single species
model with stage structure and impulses.
\end{abstract}
\vspace{3mm}

\begin{keywords}
Almost periodic, delay, impulsive action, single species models, stage structure
\end{keywords}
\vspace{3mm}

\begin{amsmos}
34K45, 34K14, 92D25
\end{amsmos}
\vspace{3mm}

\par

\par

\section{Introduction}
We investigate the existence and attractivity properties of piece-wise continuous almost periodic
solutions for systems of differential equations with delay and impulsive action. Since solutions of impulsive system
have discontinuities, almost periodicity of impulsive system can be understood in different way.
In our paper we use conception of discontinuous almost periodic function proposed in \cite{HW} and
then investigated in \cite{PA,SP,SPA,ST1,ST3} and other works.
Following ideas of \cite{Y}, we first prove the existence theorem for an asymptotically almost periodic solution of impulsive system.
This implies the existence of an discontinuous almost periodic solution.

Single species model with time-delay stage structure was proposed in \cite{AF} and then many authors studied
different kinds of stage structure biological models (see, for example, \cite{CCA,Ch,MT,MT2,X}).
Using results of first part of our paper we obtain sufficient conditions for the existence and global attractivity
of discontinuous almost periodic solutions of almost periodic single species models with stage structure and impulses.

\section{Discontinuous almost periodic solutions}
We will consider the space $\mathcal{PC}^k(J,\mathbb{R}^n), \ J \subset \mathbb{R},$
of all piece-wise continuous functions $x: J \to \mathbb{R}^n$ such that

i) the set $T = \{ t_j \in J, t_{j+1} > t_j, j \in \mathbb{Z}\}$ is the set of discontinuities of $x$;

ii) $x(t_j - 0) = x(t_j)$ and there exists $\lim_{t \to t_j +0} x(t) = x(t_j + 0) < \infty;$

iii) the function $x(t)$ is $C^k$ smooth in $J \setminus T.$

\begin{definition} \cite{SP}
The sequence $\{t_k\}$ has uniformly almost periodic differences if for any $\varepsilon > 0$
there exists a relatively dense set of $\varepsilon$-almost periods common for all sequences
$\{t^j_k\},$ where $t^j_k = t_{k+j} - t_{k}, j \in \mathbb{Z}.$
\end{definition}

\begin{definition} \cite{SP}
The function $\varphi(t) \in \mathcal{PC}^k(\mathbb{R},\mathbb{R}^n)$ is said to be W-almost periodic (W.a.p.) if

i) the sequence $\{t_k\}$ of discontinuities of $\varphi(t)$ has uniformly almost periodic differences;

ii) for any $\varepsilon > 0$ there exists a positive number $\delta = \delta(\varepsilon)$ such that if the points
$t'$ and $t''$ belong to the same interval of continuity and $|t' - t''| < \delta$ then $\|\varphi(t') - \varphi(t'')\| < \varepsilon$
($\|.\|$ is usual norm in $\mathbb{R}^n$);

iii) for any $\varepsilon > 0$ there exists a relatively dense set $\Gamma$ of $\varepsilon$-almost periods such that if
$\tau \in \Gamma,$ then $\|\varphi(t +\tau) - \varphi(t)\| < \varepsilon$ for all $t \in \mathbb{R}$ which satisfy
the condition $|t - t_k| \ge \varepsilon, k \in \mathbb{Z}.$
\end{definition}

\begin{definition} \cite{PA}
Piece-wise continuous function $\varphi_1(t) \in \mathcal{PC}(J,\mathbb{R}^n)$ is situated in the $\varepsilon$-neighborhood of
function $\varphi_2(t) \in \mathcal{PC}(J,\mathbb{R}^n)$ if $\|\varphi_1(t) - \varphi_2(t)\| < \varepsilon$ for all $t \in J$ such that
$|t - \tau^1_i| > \varepsilon, |t - \tau^2_i| > \varepsilon,$ and $|\tau^1_i - \tau^2_i| < \varepsilon, i \in \mathbb{Z},$
where $\{\tau^1_i\}$ and $\{\tau^2_i\}$ are sequences of discontinuities of $\varphi_1(t)$ and $\varphi_2(t)$ respectively.
In this case we will write $\rho(\varphi_1,\varphi_2) < \varepsilon.$

The sequence $\{f_k(t)\}$ of functions $f_k \in \mathcal{PC}(J,\mathbb{R}^n), J \subset \mathbb{R},$ converges in W-topology to function
$f \in \mathcal{PC}(J,\mathbb{R}^n)$ if for any $\varepsilon > 0$ there exists positive integer $N = N(\varepsilon)$ such that
$\|f_k(t) - f(t)\| < \varepsilon$ for all $k \ge N$ and $|t - \tau_i| > \varepsilon$ ($\tau_i$ are points of discontinuities of the function $f$
at the set $J$) and points of discontinuities of functions $f_k(t)$ which are contained in $J$ converges to points $\tau_i$ uniformly with
respect to $i$.
\end{definition}

We consider the system with delay and impulsive action
\begin{eqnarray}
& & \frac{dx(t)}{dt} = f(t,x(t),x(t-h)), \ t \not= t_k, \label{1} \\
& & x(t_k+0) = x(t_k) + I_k(x(t_k)), \ k \in \mathbb{Z}, \label{2}
\end{eqnarray}
where $x \in \mathbb{R}^n, h = const > 0.$ We assume that

1) sequence of real numbers $t_k$ has uniformly almost periodic differences;

2) function $f(t,x,y)$ is W-almost periodic in $t$ and Lipschitz in $x$ and $y$ uniformly for $x, y$ from compact sets,
the sequence of discontinuities of $f$ is the sequence $\{t_k\};$

3) the sequence $\{I_k(x)\}$ is almost periodic uniformly with respect to $x$ from compact sets. Functions $I_k(x)$ are Lipschitz in $x.$

We denote by $x_t$ the function $x(t+\theta), \theta \in [-h,0],$ where $x(t) \in \mathcal{PC}(\mathbb{R},\mathbb{R}^n).$

\begin{definition}
The piece-wise continuous function $a(t)$ is W-asymptotically almost periodic (W.a.a.p.) if it is a sum of W-almost periodic
function $p(t)$ and function $q(t) \in \mathcal{PC}$ such that $q(t) \to 0$ as $t \to \infty.$
\end{definition}

\begin{proposition}
A solution $\xi(t)$ of system (\ref{1}), (\ref{2}) is W.a.a.p. if and only if for any sequence
of real numbers $\{\tau_k\}$ such that $\tau_k \to \infty$
as $k \to \infty$ there exists a subsequence $\{\tau_{k_j}\}$ for which $\xi(t + \tau_{k_j})$ converges on
$0 \le t < \infty$ in W-topology.
\end{proposition}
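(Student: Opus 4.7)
The plan is to adapt the classical Fréchet-type criterion for asymptotically almost periodic functions to the W-topology framework set up in the preceding definitions.

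\emph{Necessity.} Write $\xi=p+q$ with $p$ W-almost periodic and $q(t)\to 0$ as $t\to\infty$. Given $\tau_k\to\infty$, I would apply the Bochner-type compactness property of W.a.p.\ functions (available in \cite{SP,PA}) to extract a subsequence $\{\tau_{k_j}\}$ along which $p(t+\tau_{k_j})$ W-converges on all of $\mathbb{R}$. On the same subsequence $q(t+\tau_{k_j})\to 0$ uniformly for $t\geq 0$ because $q$ vanishes at infinity in the usual pointwise sense and is piecewise continuous. Adding the two limits produces W-convergence of $\xi(t+\tau_{k_j})$ on $[0,\infty)$.

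\emph{Sufficiency.} This is the harder direction and proceeds in three stages. First, construct the candidate W-a.p.\ part $p(t)$: fix $\sigma_k\to\infty$ and, via a diagonal procedure on the exhaustion $\mathbb{R}=\bigcup_n[-n,\infty)$, extract a subsequence $\sigma_{k_j}$ such that $\xi(t+\sigma_{k_j}-n)$ W-converges on $[0,\infty)$ for every $n\in\mathbb{N}$. Define $p$ on all of $\mathbb{R}$ by pasting these limits via the shift identity $p(t-n)=\lim_j \xi(t+\sigma_{k_j}-n)$; consistency on overlaps is automatic since a single nested subsequence is used.

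Second, verify $p$ is W-almost periodic by checking the Bochner criterion: for any sequence $s_m\in\mathbb{R}$ choose indices $j_m$ so that $\sigma_{k_{j_m}}+s_m\to\infty$, and then apply the hypothesis to the sequence $\sigma_{k_{j_m}}+s_m$ to obtain a W-convergent subsubsequence of $\xi(t+\sigma_{k_{j_m}}+s_m)$; on the respective domains this limit coincides with a W-limit of $p(t+s_m)$, establishing relative W-compactness of the translates of $p$.

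Third, set $q=\xi-p$ and show $q(t)\to 0$. If not, there would exist $\delta>0$ and continuity points $s_m\to\infty$ (taken bounded away from the jumps $\{t_k\}$) with $\|q(s_m)\|\ge\delta$. Applying the hypothesis to $\{s_m\}$ extracts a subsequence along which $\xi(t+s_m)$ W-converges; by the construction of $p$, a further diagonal extraction yields W-convergence of $p(t+s_m)$ to the same limit, forcing $\|q(s_m)\|\to 0$ at those continuity points, a contradiction.

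The main technical obstacle is the discontinuity book-keeping throughout each diagonal extraction: one must ensure that the jump points of the translates $\xi(\cdot+\sigma_{k_j}-n)$ converge to the jumps of the respective limits uniformly in the index, as demanded by the W-topology convergence definition. This is exactly the place where hypothesis~1) on the uniformly almost periodic differences of $\{t_k\}$ enters, together with the W-topology regularity hypotheses 2)--3) on $f$ and $I_k$; without them the additive decomposition $\xi=p+q$ would not respect the discontinuity structure of $\mathcal{PC}$.
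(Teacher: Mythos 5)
Your necessity argument is fine and is essentially the paper's (decomposition plus the Bochner-type compactness of W.a.p.\ functions from \cite{PA}, with $q(t+\tau_{k_j})\to0$ uniformly on $[0,\infty)$). The sufficiency direction, however, has two genuine gaps, and both trace back to the same missing idea. In Stage 2 you invoke the Bochner criterion for $p$, but that criterion requires relative compactness of the shifts of $p$ in the W-topology \emph{on all of} $\mathbb{R}$, i.e.\ uniform closeness (away from jumps) for every $t\in\mathbb{R}$. The hypothesis of the proposition only delivers W-convergence on $[0,\infty)$, so your transfer argument only gives W-convergence of $p(\cdot+s_m)$ on $[0,\infty)$ (or, after further diagonalization, on each half-line $[-n,\infty)$) --- and uniform convergence on every half-line is strictly weaker than uniform convergence on $\mathbb{R}$. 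The paper never uses Bochner's criterion here precisely for this reason: it verifies the Bohr-type Definition 2(iii) instead, whose inequality $\|p(t+\tau)-p(t)\|<\varepsilon$ is \emph{pointwise} in $t$ and can therefore be pushed from the far half-line to all of $\mathbb{R}$ by replacing $t$ with $t+\tau_n$, $n$ large, and passing to the limit. The relatively dense set of $\tau$'s needed for that step comes from the paper's first and key move --- the Yoshizawa-style contradiction argument showing that the set (\ref{set1}) of asymptotic $\varepsilon$-almost periods of $\xi$ is relatively dense --- which is entirely absent from your proposal.

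Stage 3 is not just incomplete but false as stated: with $p$ constructed as a W-limit of $\xi(\cdot+\sigma_{k_j})$ for an \emph{arbitrary} sequence $\sigma_k\to\infty$, one cannot conclude $\xi-p\to0$. Already in the continuous scalar case, take $\xi(t)=\sin t+q(t)$ with $q\to0$ and $\sigma_k=2\pi k+1$: then $p(t)=\sin(t+1)$ and $\xi(t)-p(t)=\sin t-\sin(t+1)+q(t)\not\to0$. Your purported contradiction --- that $\xi(\cdot+s_m)$ and $p(\cdot+s_m)$ must W-converge to the same limit --- fails in this example (with $s_m=2\pi m$ one gets $\xi(t+s_m)\to\sin t$ while $p(t+s_m)=\sin(t+1)$), and nothing in the construction of $p$ forces it. This is exactly why the paper asserts only that $\xi(t)-p^*(t)\to0$ where $p^*$ is \emph{some translate} (hull element) of $p$, citing the argument of \cite{F}, p.~158; that argument again runs through the relatively dense set of asymptotic almost periods of $\xi$. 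In short, both of your gaps would be repaired by inserting the paper's first step (relative density of the set (\ref{set1})): from it, W.a.p.-ness of $p$ follows by the pointwise limit transfer, and the decay $\xi-p^*\to0$ by Fink's argument; without it, your Stages 2 and 3 do not close.
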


\begin{proof} {\it Necessity.} Let $\xi(t)$ be W.a.a.p., then
$\xi(t) = p(t) + q(t)$ where $p(t)$ is W.a.p. and $q(t) \to 0$ as $t \to \infty.$
By \cite{PA}, a piece-wise continuous function $p(t)$ is W.a.p. if and only if every infinite set of shifts
$\{\varphi(t + \tau_n)\}$ is compact relative to W-topology. Since $q(t + \tau_k) \to 0$ as $\tau_k \to \infty$  for all $t\ge 0,$
then there exists a subsequence $\{\tau_{k_j}\}$ for which $\xi(t + \tau_{k_j})$ converges on
$0 \le t < \infty$ in W-topology.

{\it Sufficiently.} Since the sequence $\{t_k\}$ of points of impulses
has uniformly almost periodic differences then by \cite{ST1, ST3}
for any sequence $\{\tau_k\}$ there exist subsequence (which we denote by $\{\tau_k\}$ again),
 sequence $\{p_k\}$ with uniformly almost periodic differences,
and sequence $\{\alpha(k)\}$ such that
\begin{eqnarray} \label{li}
\lim_{k \to \infty}(t_{n + \alpha(k)} - \tau_k) = p_n
\end{eqnarray}
uniformly in $n \in \mathbb{Z}.$
Taking into account (\ref{li}) and the W-convergence of $\xi(t + \tau_{k})$, we have that for every $\varepsilon > 0$ there
exists positive integer $N = N(\varepsilon)$ such that
$|t_{n + \alpha(k)} - \tau_k - p_n| < \varepsilon$ for all $k \ge N$ and $n \in \mathbb{Z}$
and $\|\xi(t + \tau_k) - \xi(t + \tau_m)\| < \varepsilon$ for all $k,m \ge N$ and $t \in [0, \infty), |t + \tau_k - t_n| > \varepsilon,
|t + \tau_m - t_n| > \varepsilon.$

First, we prove that for every $\varepsilon > 0$ there is $T = T(\varepsilon)$ such that the set
\begin{eqnarray} \label{set1}
\{ \tau: \ \sup_{t \ge T(\varepsilon)}\|\xi(t+\tau) - \xi(t)\| < \varepsilon, \ |t - t_k| > \varepsilon\}
\end{eqnarray}
is relatively dense in $\mathbb{R}.$ Conversely, suppose that there exists $\varepsilon_0 > 0$ such that set
(\ref{set1}) is not relatively dense. Then for this $\varepsilon_0$ and any $T(\varepsilon_0)$ there exists
a sequence of intervals $[h_n-l_n, h_n+l_n]$ such that
$\sup_{t \ge T(\varepsilon_0), |t-t_k| \ge \varepsilon_0}\|\xi(t+\tau) - \xi(t)\| \ge \varepsilon_0$ for all points $\tau \in [h_n-l_n, h_n+l_n].$
Let $l_1$ be arbitrary and $l_n > \max_{m < n} l_m$, then
$h_n - h_m \in [h_n-l_n, h_n+l_n]$ if $m < n.$
Therefore
$$\sup\limits_{t \ge h_n, |t + h_n-t_k| \ge \varepsilon_0}\|\xi(t + h_n) - \xi(t + h_m)\| =
 \sup\limits_{t \ge 0, |t-t_k| \ge \varepsilon_0}\|\xi(t) - \xi(t + h_n - h_m)\| \ge \varepsilon_0.$$
This contradicts the W-convergence of $\{\xi(t + h_n)\}.$

Assume that the sequence $\{\xi(t+\tau_k)\}$ converges on $[0,\infty)$ in W-topology to function $p(t)$ with the sequence of discontinuities
$\{p_n\}.$ As a limit of shifts of sequence $\{t_n\}$ the sequence $\{p_n\}$ has uniformly almost periodic differences.
Choosing a subsequence of $\{\tau_k\}$ if necessary we can construct function $p(t)$ on the hole axis such that
$\xi(t+\tau_k)$ converges to $p(t)$ in W-topology on compact subsets of $\mathbb{R}.$

Now we prove that $p(t)$ is W.a.p. We have proved that if $\varepsilon > 0$ is given then there exist $T(\varepsilon)$
and relatively dense set of numbers $\tau$ such that $\|\xi(t+\tau) - \xi(t)\| < \varepsilon$ if $t \ge T(\varepsilon), t + \tau \ge T(\varepsilon)$
and $|t - t_k| > \varepsilon.$
Introducing $\tau_n$ with sufficiently large $n$ we have
$\|\xi(t+ \tau_n + \tau) - \xi(t + \tau_n)\| < \varepsilon$
if $t \ge T(\varepsilon) - \tau_n, \ t
+ \tau \ge T(\varepsilon) - \tau_n$
and $|t + \tau_n - t_k| > \varepsilon.$ Fix $t$ and $\tau$ and take $n$ large enough so the last inequalities are correct. Then taking limits
$n \to \infty$ we have $\|p(t+\tau) - p(t)\| < \varepsilon.$ This holds for $t \in \mathbb{R}, |t - p_k| > \varepsilon$
and relatively dense set of
$\varepsilon$-almost periods $\tau.$ Thus $p(t)$ is W.a.p.
Analogously with \cite{F}, p. 158, we can show that $\xi(t) - p^*(t) \to 0$ as $t \to \infty,$ where W.a.p. function $p^*(t)$
is some translation of W.a.p. function $p(t).$
\end{proof}

\begin{theorem}
Suppose that system (\ref{1}), (\ref{2}) has a solution $\xi(t)$ defined on $I = [0, \infty)$ such that
$\|\xi(t)\| \le B$ for all $t \ge 0.$ If $\xi(t)$ is W.a.a.p. then the system (\ref{1}), (\ref{2}) has an W.a.p.
solution $p(t).$
\end{theorem}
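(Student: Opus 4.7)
The plan is to exploit the characterization supplied by the preceding Proposition: since $\xi(t)$ is W.a.a.p., every shift sequence $\{\xi(t+\tau_k)\}$ with $\tau_k \to \infty$ has a subsequence converging in W-topology to a W.a.p. function (this W-limit is W.a.p., as was shown in the sufficiency half of the Proposition). The strategy is to choose the shifts to be simultaneous approximate almost periods of the system data $f$, of the sequence $\{t_k\}$ and of the sequence $\{I_k\}$, so that the shifted equation collapses to (\ref{1}), (\ref{2}) in the limit and the W-limit $p(t)$ of the bounded shifts is the sought W.a.p. solution.

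First I would invoke hypotheses 1)--3) to note that for every $\varepsilon>0$ the set of common $\varepsilon$-almost periods of $f(t,\cdot,\cdot)$, of $\{t_k\}$ and of $\{I_k(\cdot)\}$ is relatively dense, and hence one can select $\tau_k\to+\infty$ with $\tau_k$ a common $\varepsilon_k$-almost period and $\varepsilon_k\to 0$. The shifted functions $y_k(t)=\xi(t+\tau_k)$ are uniformly bounded by $B$ and satisfy $y_k'(t)=f(t+\tau_k,y_k(t),y_k(t-h))$ off $\{t_j-\tau_k\}$ together with the corresponding jump conditions involving $I_j$. By the Proposition, after passing to a subsequence $y_k\to p$ in W-topology on each compact subset of $\mathbb{R}$, with $p(t)$ W.a.p.; the discontinuity set $\{p_n\}$ of $p$ consists, by (\ref{li}), of the uniform-in-$n$ limits $t_{n+\alpha(k)}-\tau_k\to p_n$ for a suitable reindexing $\alpha(k)$.

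Second, I would pass to the limit in the equation and in the jumps. On any compact subinterval of continuity (i.e., away from the $\varepsilon$-neighborhoods of $\{p_n\}$) the convergence $y_k\to p$ is uniform, and because $\tau_k$ is an $\varepsilon_k$-almost period, $f(t+\tau_k,x,y)$ is $\varepsilon_k$-close to $f(t,x,y)$ uniformly for $\|x\|,\|y\|\le B$. Integrating (\ref{1}) between consecutive jumps, using Lipschitz continuity of $f$, and letting $k\to\infty$, one obtains $p'(t)=f(t,p(t),p(t-h))$ at every continuity point. At each $p_n$ the jump equation $y_k(t_{n+\alpha(k)}-\tau_k+0)-y_k(t_{n+\alpha(k)}-\tau_k)=I_{n+\alpha(k)}(y_k(t_{n+\alpha(k)}-\tau_k))$ passes to the limit because the almost-periodicity of $\{I_k(x)\}$ uniform in $x$ on compacta gives $I_{n+\alpha(k)}(x)\to I_n(x)$ up to $\varepsilon_k$, together with Lipschitz dependence on $x$; this delivers $p(p_n+0)=p(p_n)+I_n(p(p_n))$, so $p$ solves (\ref{1}), (\ref{2}).

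The main obstacle will be the limit passage at and near the impulsive times, because the discontinuity sets $\{t_j-\tau_k\}$ move with $k$. The W-topology is tailored precisely for this situation (uniform convergence off small neighborhoods of jumps, together with convergence of the jump locations), but it must be combined carefully with the almost-periodicity of both $f$ and $\{I_k\}$ and with the reindexing $\alpha(k)$ from (\ref{li}) to ensure that the limit equation is the \emph{original} system and not merely some representative of its hull — this matching step is what forces the choice of $\tau_k$ as approximate common almost periods rather than an arbitrary divergent sequence.
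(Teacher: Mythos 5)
Your proposal is correct and is essentially the paper's own argument: pick shifts $\tau_k \to \infty$ that are approximate common almost periods of the data so that the shifted system collapses back to (\ref{1}), (\ref{2}), obtain W-convergence of $\xi(t+\tau_k)$ to a W.a.p.\ function, and pass to the limit in the integral form of the equation, jumps included. The only difference is bookkeeping: the paper takes the $\tau_k$ to be almost periods of $\{t_k\}$ and of the almost periodic part $p$ in the decomposition $\xi = p + q$, so the W-limit is identified as $p$ itself (using $q(t+\tau_k) \to 0$), whereas you extract an abstract subsequential W-limit via the Proposition — and your version is, if anything, more explicit than the paper about the $\tau_k$ also having to be almost periods of $f$ and $\{I_k\}$, which the limit passage genuinely requires.
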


\begin{proof}
W.a.a.p. solution $\xi(t)$ has form $\xi(t) = p(t) + q(t)$ where $p(t)$ is W.a.p. and
$q(t) \to 0$  as $t \to \infty.$
Let $\{\tau_k\}$ be a sequence such that there exist a sequence $\{\alpha(k)\}$
and $N$ such that $|t_{n+\alpha(k)} - \tau_k - t_n| < \varepsilon$ and $\rho(p(t + \tau_k), p(t)) < \varepsilon$
for all $n \in \mathbb{Z}$ and $k \ge N.$
Obviously, $q(t + \tau_k) \to 0$ as $k \to \infty.$

The function $\xi(t + \tau_m)$ satisfies equation
\begin{eqnarray}
& & \dot x(t) = f(t + \tau_m, x(t), x(t - h)), \label{1m}\\
& & x(t_n - \tau_m + 0) = x(t_n - \tau_m) + I_n(x(t_n - \tau_m)), \ n = 0,1,... \label{2m}
\end{eqnarray}

Denote $t_n - \tau_m = t'_{n - \alpha(m)}.$ Let $n - \alpha(m) = j,$ then $n = j + \alpha(m)$ and  equation (\ref{1m}), (\ref{2m})
is written in the form
\begin{eqnarray}
& & \dot x(t) = f(t + \tau_m, x(t), x(t - h)), \label{11m}\\
& & x(t'_j + 0) = x(t'_j) + I_{j + \alpha(m)}(x(t'_j)), \ j = 0,1,... \label{22m}
\end{eqnarray}

For $\varepsilon > 0$ there exists $\delta = \delta(\varepsilon) > 0$ such that
\begin{eqnarray*}
& & \| f(t + \tau_m, \xi(t + \tau_m), \xi(t + \tau_m-h)) - \\
& & \qquad \qquad - f(t + \tau_m, p(t + \tau_m), p(t + \tau_m-h))\| < \varepsilon, \\
& & \| I_{j + \alpha(m)}(\xi(t'_j)) - I_{j + \alpha(m)}(p(t'_j)) \| < \varepsilon
\end{eqnarray*}
if $\|\xi(t + \tau_m) - p(t + \tau_m)\| = \| q(t + \tau_m)\| < \delta$ and $\| \xi(t'_j) - p(t'_j)\| = \| q(t'_j)\| < \delta.$

Let $[\bar t_1, \bar t_2]$ be some subinterval of $\mathbb{R}.$
 For $\delta(\varepsilon)$ there exists positive integer $N$ such that
$\rho(p(t),p(t + \tau_m)) < \delta$ for all $[\bar t_1, \bar t_2]$ and $m \ge N.$

We write system (\ref{11m}), (\ref{22m}) in the integral form
\begin{eqnarray*}
& & \xi(t + \tau_m) = \xi(\bar t_1 + \tau_m) + \int_{\bar t_1}^t f(s + \tau_m, \xi(s + \tau_m), \xi(s + \tau_m-h))ds + \\
& & + \sum_{\bar t_1 \le t'_j < t} I_{j + \alpha(m)}(\xi(t'_{j} + \alpha_m).
\end{eqnarray*}
Making $\tau_m \to \infty$ we have
\begin{eqnarray*}
p(t) = p(\bar t_1) + \int_{\bar t_1}^t f(s, p(s), p(s-h))ds + \sum_{\bar t_1 \le t_j < t} I_{j}(p(t_{j}).
\end{eqnarray*}
Hence, W.a.p. function $p(t)$ is a solution of system (\ref{1}), (\ref{2}).
\end{proof}

\begin{theorem}
Let bounded solution $\xi(t)$ of the system (\ref{1}), (\ref{2}) be uniformly asymptotically stable for $t \ge 0.$
Then $\xi(t)$ is W.a.a.p. and system (\ref{1}), (\ref{2}) has W.a.p. solution which is uniformly asymptotically stable
for $t \ge 0.$
\end{theorem}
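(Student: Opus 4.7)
The plan is to deduce W.a.a.p.\ of $\xi(t)$ from its uniform asymptotic stability and then invoke the preceding theorem to produce the W.a.p.\ solution. By the Proposition it is enough to check that every sequence $\tau_k \to \infty$ admits a subsequence along which $\xi(t + \tau_{k_j})$ converges in W-topology on $[0, \infty)$; the stability assertion for the limit solution will be handled separately at the end.

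First I would perform the standard diagonal extraction. Using assumptions 1)--3), pass to a subsequence (still labelled $\tau_k$) and choose integers $\alpha(k)$ such that $t_{n + \alpha(k)} - \tau_k \to p_n$ uniformly in $n$, the shifts $f(t + \tau_k, x, y)$ converge in W-topology to some $\tilde f(t,x,y)$ uniformly on compact $(x,y)$-sets, and $I_{n + \alpha(k)}(x) \to \tilde I_n(x)$ uniformly on compacts. Boundedness of $\xi$ permits a further extraction so that $\xi(\tau_k)$ converges in $\mathbb{R}^n$.

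The crucial step is to show that with these preparations the shifts $\xi(t + \tau_k)$ form a Cauchy sequence in W-topology on $[0,\infty)$. My approach is to exploit the fact that a bounded uniformly asymptotically stable solution of (\ref{1}),(\ref{2}) is \emph{totally stable} under small W-perturbations of the right-hand side and the impulse data: any solution $y(t)$ of such a perturbed system that starts $\delta$-close to $\xi$ at time $s$ remains $\varepsilon$-close for all $t \ge s$. Each $\xi(t + \tau_k)$ satisfies an impulsive delay system whose data, after the shift, differs from the limit system $(\tilde f,\tilde I_n,p_n)$ by an amount tending to zero, so comparing $\xi(t+\tau_k)$ and $\xi(t+\tau_m)$ via total stability, together with the convergence of $\xi(\tau_k)$, yields $\rho(\xi(\cdot + \tau_k),\xi(\cdot + \tau_m)) \to 0$. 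The main obstacle is making this total-stability argument rigorous in the impulsive delay setting, where one must simultaneously control the initial history on $[-h,0]$, the jumps $I_k$, and the mismatch between the impulse instants $\{t_{n+\alpha(k)} - \tau_k\}$ and $\{p_n\}$ that defines the W-topology tolerance.

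Once W.a.a.p.\ is established, the preceding theorem yields a W.a.p.\ solution $p(t)$ with $\xi(t) - p(t) \to 0$ as $t \to \infty$ (after absorbing the translation appearing in the Proposition). To transfer uniform asymptotic stability from $\xi$ to $p$, pick a solution $y(t)$ starting $\delta/2$-close to $p(s)$ at a time $s$ so large that $\|\xi(s) - p(s)\| < \delta/2$; the triangle inequality places $y(s)$ within $\delta$ of $\xi(s)$, so uniform asymptotic stability of $\xi$ forces $\|y(t) - \xi(t)\|$ to stay small and then tend to zero, and since $\xi(t) - p(t) \to 0$ we obtain the analogous estimate for $\|y(t) - p(t)\|$. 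Uniform asymptotic stability of $p$ at an arbitrary $s \ge 0$ follows by translating by a sufficiently long common $\varepsilon$-almost period of $f$, $\{I_k\}$, $\{t_k\}$ and $p$, which approximately preserves the system and the solution $p$ and thereby reduces the general case to the large-$s$ case already handled.
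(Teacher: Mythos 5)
Your overall strategy coincides with the paper's: verify the Proposition's compactness criterion for the shifts $\xi(t+\tau_k)$, extract convergent subsequences of the data $t_{n+\alpha(k)}-\tau_k$, $f(t+\tau_k,x,y)$, $I_{n+\alpha(k)}(x)$, and finish by invoking the preceding theorem. But the step you yourself label ``the main obstacle'' --- that a bounded uniformly asymptotically stable solution is totally stable under small W-perturbations of the right-hand side, the jumps, and the impulse instants --- is not a citable fact in the impulsive delay setting; it is the entire content of the paper's proof, so leaving it unresolved is a genuine gap rather than a routine technicality. The mechanism the paper uses is a bootstrapping iteration. Fix $k \ge m$ large and let $\eta(t)$ be the solution of the $m$-shifted system (\ref{11m}), (\ref{22m}) with initial history $\eta_0 = \xi^k_0$. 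Uniform asymptotic stability, inherited by the shifted system with the same $\delta(\varepsilon)$ and $T(\varepsilon)$, gives $\|\xi^m(t)-\eta(t)\| < \varepsilon/2$ for all $t \ge 0$ and $\|\xi^m(t)-\eta(t)\| < \delta_1/2$ for $t \ge T(\varepsilon)$, where $\delta_1 = \min(\varepsilon,\delta)$. Separately, a continuous-dependence estimate in the style of Dishliev and Bainov, carried out in succession on the intervals between the finitely many, nearly coincident impulse instants $t'_j, t''_j$ lying in $[0,T(\varepsilon)+h]$, gives $\|\eta(t)-\xi^k(t)\| < \delta_1/2$ on $[0,T(\varepsilon)+h]$ away from small neighborhoods of the jumps. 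Combining the two, $\rho(\xi^k,\xi^m) < \varepsilon$ on $[0,T(\varepsilon)+h]$ and, crucially, $\rho(\xi^k,\xi^m) < \delta_1 \le \delta$ on $[T(\varepsilon),T(\varepsilon)+h]$; this last bound re-enters the stability hypothesis as a fresh initial condition at time $T(\varepsilon)$, so the same two-step estimate can be repeated on $[T(\varepsilon),2T(\varepsilon)]$, and inductively on $[qT(\varepsilon),(q+1)T(\varepsilon)]$ for every $q$. That feedback of the finite-interval perturbation estimate into the stability radius is precisely what converts uniform asymptotic stability into the Cauchy property of the shifts; without it your ``total stability'' claim has no proof.

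Two further remarks. First, extracting convergence of the point values $\xi(\tau_k)$ is insufficient for a delay equation: you need W-convergence of the history segments $\xi_{\tau_k}$ on $[-h,0]$ (the paper extracts $\xi^m_0 \to \zeta_0$). This is harmless to repair, since $\xi$ is bounded and has bounded derivative off the impulse set, but it must be stated, because the stability comparison between $\xi^m$ and $\eta$ starts from closeness of histories, not of single values. Second, your closing argument transferring uniform asymptotic stability to the W.a.p.\ solution $p$ (handle large initial times by the triangle inequality with $\xi(t)-p(t)\to 0$, then reduce arbitrary initial times via long common almost periods of $f$, $\{I_k\}$, $\{t_k\}$ and $p$) addresses a point the paper in fact glosses over entirely; that part of your plan is a genuine addition, though it too would need the almost-period translation step made quantitative to count as a proof.
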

\begin{proof} Since $\xi(t)$ is uniformly asymptotically stable, then for any $\varepsilon > 0$ there exist
$\delta = \delta(\varepsilon) > 0$ and $T(\varepsilon) > 0$ such that if $x(t)$ is a solution of (\ref{1}), (\ref{2})
such that $\rho(\xi_0, x_0) < \delta$ then
$\| \xi(t) - x(t)\| < \varepsilon/2$ for $t \ge 0$ and $\| \xi(t) - x(t)\| < \delta_1/2$ for all
$t \ge T(\varepsilon), \ \delta_1 = \min(\varepsilon,\delta).$

Let $\{\tau_m\}$ be a sequence such that $\tau_{m+1} > \tau_m, \tau_m \to \infty$ as $m \to \infty.$

Denote $\xi^m(t) = \xi(t + \tau_m).$ Then $\xi^m(t)$ is a solution of system (\ref{11m}), (\ref{22m})
and $\xi^m(t)$ is uniformly asymptotically stable with same $\delta(\varepsilon)$ and $T(\varepsilon)$ as for $\xi(t).$

There exists a subsequence of $\{\tau_m\}$ (which we denote by $\{\tau_m\}$ again) such that there exist
sequence $\{p_n\}$ with uniformly almost periodic differences and sequence $\alpha(m)$ such that:

$\lim_{m \to \infty}(t_{i+\alpha(m)} - \tau_m) = p_i, \ \lim_{m \to \infty}(I_{i+\alpha(m)}(x) = J_i(x)$
uniformly with respect to $i \in \mathbb{Z}$ and $\|x\| \le K,$

$f(t + \tau_m,x,y)$ tends to $g(t,x,y)$ in W-topology uniformly with respect to $x,y, \|x\| \le K, \|y\| \le K,$

$\xi_0^m = \{\xi^m(\theta), \theta \in [-h,0]\}$ converges in W-topology to $\zeta_0 = \{\xi(\theta), \theta \in [-h,0]\}.$

Therefore, for some $\delta_2 > 0$ there exists a positive integer $k_0(\varepsilon)$
such that if $k \ge m \ge k_0(\varepsilon)$ then
$$\|f(t+\tau_k,x,y) - f(t+\tau_m,x,y)\| < \delta_2(\varepsilon)$$
for all $\|x\| \le K, \|x\| \le K$ and all $t \in \mathbb{R}, |t + \tau_k - t_j| > \delta_2, |t + \tau_m - t_j| > \delta_2, j \in \mathbb{Z},$
and
$$\|\xi(\theta + \tau_k) - \xi(\theta + \tau_m)\| <  \delta_1(\varepsilon)$$
for all $\theta \in [-h,0], |\theta + \tau_k - t_j| > \delta, |\theta + \tau_m - t_j| > \delta, j \in \mathbb{Z}.$

Let $\eta(t)$ be the solution of system (\ref{11m}), (\ref{22m}) with initial function $\eta_0 = \xi^k_0.$
Since system (\ref{11m}), (\ref{22m}) is uniformly asymptotically stable then
$\| \xi^m(t) - \eta(t)\| < \varepsilon/2$ for all $t \ge 0$ and
$\| \xi^m(t) - \eta(t)\| < \delta_1/2$ for all $t \ge T(\varepsilon).$

Let us estimate the difference $\eta(t) - \xi^k(t)$ if $t \in [0,T(\varepsilon) + h].$
The function $\xi^k(t)$ satisfies equation
\begin{eqnarray}
& & \dot y(t) = f(t + \tau_k, y(t), y(t - h)), \label{111m}\\
& & y(t''_j + 0) = y(t''_j) + I_{j + \alpha(k)}(y(t''_j)), \ j = 0,1,..., \label{222m}
\end{eqnarray}
where $t''_j = t_{j + \alpha(k)} - \tau_k.$

Difference $\eta(t) - \xi^k(t)$ satisfies following integral equation
\begin{eqnarray*}
& & \eta(t) - \xi^k(t) = \eta(0) - \xi^k(0) + \\
& & + \int_0^t \left( f(s + \tau_m, \eta(s),\eta(s-h)) - f(s + \tau_k, \xi^k(s),\xi^k(s-h))\right) ds + \\
& & + \sum_{0 \le t'_j < t}I_{j + \alpha(m)}(\eta(t'_j)) - \sum_{0 \le t''_j < t}I_{j + \alpha(k)}(\xi(t''_j)).
\end{eqnarray*}

By \cite{SP}, p. 191, there exist a number $l > 0$ and a positive integer $q$ such that any interval of the time axis of
length $l$ contains no more then $q$ terms of the sequence $\{p_n\}.$
Since $t'_n = t_{n+\alpha(m)} - \tau_m, t''_n = t_{n+\alpha(k)} - \tau_k,$ and $t'_n \to p_n, t''_n \to p_n,$
as $m \to \infty, k \to \infty,$ then interval $[0, T(\varepsilon)+h)$ contains finite number of points from
the sequence $\{p_n\}$ and the same number of points $t'_n$ and $t''_n$ (if $k$ and $m$ are sufficiently large.)

Analogously to \cite{DB}, Theorem 5, we estimate difference $\eta(t) - \xi^k(t)$ in succession on interval
$[0, \min(t'_1, t''_1)],$ at point $\max(t'_1, t''_1),$ on the interval $[\max(t'_1, t''_1), \min(t'_2, t''_2)]$ and so on.

As result, for $\delta_1/2$ there exists $\delta_2(\varepsilon)$ such that if $|t'_j - t''_j| < \delta_2, \
\|I_{j+q(k)} - I_{j+q(m)}\| < \delta_2, \ \|f(t+\tau_k,x,y) - f(t+\tau_m,x,y)\| < \delta_2$ then
$|\xi^k(t) - \eta(t)| < \delta_1/2$ for all $t \in [0, T(\varepsilon)+h], |t + \tau_k - t_j| > \delta/2, |t + \tau_m - t_j| > \delta/2,j \in \mathbb{Z}.$

Therefore, we have inequality $\rho(\xi^k, \xi^m) < \varepsilon$ on the interval $t \in [0,T(\varepsilon)+h]$ and
$\rho(\xi^k, \xi^m) < \delta_1 \le \delta$ on the interval $t \in [T(\varepsilon),T(\varepsilon)+h].$

By the same argument as the above, we can see that if $m \ge k \ge k_0(\varepsilon),$ then
$$\rho(\xi^k_{t},\xi^m_{t}) < \varepsilon \quad {\rm for} \quad  t \in [T(\varepsilon), 2T(\varepsilon)]$$ and, in general,
$$\rho(\xi^k_{t},\xi^m_{t}) < \varepsilon \quad {\rm for} \quad t \in [qT(\varepsilon), (q+1)T(\varepsilon)], q =1,2,....$$

Hence, $\xi$ is W.a.a.p. and equation has W.a.p. solution.
\end{proof}

\section{Stage structure model}

We consider the system of differential equations with impulsive action, which describe the behavior of
biological species with two stages, immature and mature
\begin{eqnarray}
& & \dot x_i(t) = \alpha(t)x_m(t) -\gamma(t)x_i(t) - \alpha(t-h)e^{-\int_{t-h}^t\gamma(s)ds} x_m(t-h), \label{sst1}\\
& & \dot x_m(t) = \alpha(t-h)e^{-\int_{t-h}^t\gamma(s)ds} x_m(t-h) - \beta(t)x_m^2(t), \label{sst2}
\end{eqnarray}
for $t \not= t_k$ and impulsive action
\begin{eqnarray}
  x_m(t_k + 0) = (1 + d_k) x_m(t_k), \label{sst3}
\end{eqnarray}
at moments $t_k, k \in \mathbb{Z}.$ We assume that
the sequence $\{t_k\}$ of moments of impulsive action has uniformly almost periodic differences,
the sequence $\{d_k\}$ is almost periodic, $d_k \in (-1,d], d>0,$
functions $\alpha(t), \beta(t)$ and $\gamma(t)$ are piece-wise continuous positive and W-almost periodic.
For the sake of simplicity, we assume that points of discontinuities of $f$ are $t_k, k \in \mathbb{Z}.$

Here, $x_i(t)$ and $x_m(t)$ denote the density of immature and mature populations respectively.
The birth of immature population at time $t > 0$ is proportional to the existing mature population with
birth rate $\alpha(t),$  $\gamma(t)$ is the immature death rate, $\beta(t)$ is the mature
death and overcrowding rate, $h$ is the time to maturity. The term $\alpha(t-h)e^{-\int_{t-h}^t\gamma(s)ds} x_m(t-h)$
represents the transformation of immatures to matures.

According to biological interpretation we consider nonnegative solutions of (\ref{sst1}) - (\ref{sst3})
with initial conditions
\begin{eqnarray}
& & x_i(0) = \varphi_i > 0, \label{ini1} \\
& & x_m(\theta) = \psi_m(\theta) \ge 0, \ \theta \in [-h, 0], \ \psi_m(0) > 0. \label{ini2}
\end{eqnarray}

For a function $g(t)$ bounded on the real axis, we denote $g^L = \inf_t g(t), g^M = \sup_t g(t).$

For almost periodic sequence $\{d_n\},$ there exists
$$\sigma = \lim_{T \to \infty} \frac{1}{T}\sum_{0 \le t_k < T}\ln (1 + d_k).$$
We assume that the function
$$\omega (t) = \prod_{0 \le t_k < t}(1 + d_k) e^{-\sigma t}$$ is W-almost periodic.
Then functions
\begin{eqnarray*}
A(t) = \prod_{t-h \le t_k < t}(1+ d_k)^{-1}\alpha(t-h) \exp\left(\sigma h - \int_{t-h}^t \gamma(s)ds\right)
\end{eqnarray*}
and
\begin{eqnarray*}
C(t) = \prod_{0 \le t_k < t}(1+ d_k)e^{-\sigma t} \beta(t)
\end{eqnarray*}
are also W-almost periodic.

\begin{theorem} \label{theorem3}
Assume that the inequality
\begin{eqnarray}
\sigma + \sup_t \left( \alpha(t-h)e^{\sigma -\int_{t-h}^t\gamma(s)ds}\prod_{t-h \le t_k < t}(1 + d_k)^{-1}\right) >0 \label{od44}
\end{eqnarray}
is fulfilled. Then system (\ref{sst1}) - (\ref{sst3}) is permanent, i.e.,
there exist positive constants $m_0$ and $M_0$ such that
all its solutions with
initial values (\ref{ini1}), (\ref{ini2}) satisfy inequalities
$\liminf_{t \to \infty} x_i(t) \ge m_0, \  \limsup_{t \to \infty} x_i(t) \le M_0, \
\liminf_{t \to \infty} x_m(t) \ge m_0, \  \limsup_{t \to \infty} x_m(t) \le M_0.$

If, in addition, the inequality
 \begin{eqnarray}
(A^M + \sigma)C^M < 2 C^L (A^L + \sigma) \label{od444}
\end{eqnarray}
is satisfied, then system (\ref{sst1}) - (\ref{sst3}) has unique positive W-almost periodic solution
which is globally attractive.
\end{theorem}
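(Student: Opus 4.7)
The approach is to reduce the impulsive delay system (\ref{sst2})--(\ref{sst3}) to an impulse-free scalar delay equation by the substitution $x_m(t) = \omega(t) y(t)$, then to handle permanence and attractivity for the reduced equation, and finally to extract the W-almost periodic solution via Theorem~2. Since $\omega$ has the same multiplicative jump $(1+d_k)$ at each $t_k$ and satisfies $\dot\omega = -\sigma\omega$ between jumps, a direct calculation shows that $y$ is continuous on $\mathbb{R}$ and obeys
\begin{equation*}
\dot y(t) = A(t)\,y(t-h) + \sigma\, y(t) - C(t)\, y^2(t).
\end{equation*}
Because $\omega$ is W-almost periodic and strictly positive it is bounded above and below by positive constants, so permanence and global attractivity for $y$ transfer directly to $x_m$. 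The immature component $x_i$ is then recovered from $x_m$ by the variation-of-constants formula for the linear equation (\ref{sst1}), whose asymptotic tail reduces to $\int_{t-h}^t e^{-\int_s^t\gamma(u)du}\alpha(s)x_m(s)\,ds$, so two-sided bounds and convergence transfer from $x_m$ to $x_i$.

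For the permanence claim I would use fluctuation arguments on the reduced equation. Setting $M = \limsup_{t\to\infty} y(t)$ and taking $t_n\to\infty$ with $y(t_n)\to M$ and $\dot y(t_n)\ge 0$ yields, after passing to the limit and using $y(t_n-h)\le M+o(1)$, the bound $C^L M^2 \le (A^M+\sigma) M$, hence $M \le (A^M+\sigma)/C^L$. A symmetric fluctuation argument at a minimizing sequence, together with hypothesis (\ref{od44}), produces $\liminf_{t\to\infty} y(t) \ge (A^L+\sigma)/C^M > 0$. Once the positive two-sided bound on $y$ is established, the substitution and the variation-of-constants formula deliver common constants $0 < m_0 \le \liminf x_\bullet(t) \le \limsup x_\bullet(t) \le M_0$ for both $x_i$ and $x_m$.

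For the attractivity part let $y_1,y_2$ be two positive permanent solutions, so $w:=y_1-y_2$ satisfies $\dot w = A(t)\,w(t-h) + [\sigma - C(t)(y_1+y_2)]\, w$. Introduce the Lyapunov functional
\begin{equation*}
V(t) = |w(t)| + \int_{t-h}^t A(s+h)\,|w(s)|\,ds,
\end{equation*}
whose Dini derivative is bounded by $[A(t+h) + \sigma - C(t)(y_1(t)+y_2(t))]\,|w(t)|$. For $t$ large enough $y_1(t)+y_2(t) \ge 2 m_0 - \varepsilon$ with $m_0 = (A^L+\sigma)/C^M$, so the bracket is at most $A^M + \sigma - 2C^L(A^L+\sigma)/C^M$, which is strictly negative by hypothesis (\ref{od444}); integrating forces $|w|\to 0$. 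Pulling back yields $|x_{m,1}-x_{m,2}|\to 0$ and, via the representation of $x_i$, also $|x_{i,1}-x_{i,2}|\to 0$. Thus every positive permanent solution is uniformly asymptotically stable, so Theorem~2 supplies a W-almost periodic solution $p(t)$; uniqueness follows because the W-distance of two W-almost periodic positive solutions must tend to zero, but any W-almost periodic function vanishing at infinity is identically zero.

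The step I expect to be most delicate is ruling out the degenerate case $m=0$ inside the permanence lower bound, since the fluctuation argument presumes $m>0$. When $\sigma<0$ the linear term $\sigma y$ pushes $y$ toward the origin, so one must exploit (\ref{od44}) and the delay feedback $A(t)y(t-h)$ at local minima to obstruct collapse: arguing by contradiction, $y(t_n)\to 0$ at minima forces $A(t_n)y(t_n-h)\to 0$ and hence $y(t_n-h)\to 0$, and the quantitative gap $A^M + \sigma > 0$ provided by (\ref{od44}) must then be shown to contradict this iterated decay.
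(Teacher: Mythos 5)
Your overall architecture coincides with the paper's: the same substitution $x_m(t)=\omega(t)y(t)$ reducing (\ref{sst2})--(\ref{sst3}) to the impulse-free equation (\ref{bez1}), essentially the same derivation of the stability condition (your bracket $A^M+\sigma-2C^L(A^L+\sigma)/C^M<0$ is exactly (\ref{od444})), and the same final appeal to Theorem 2. The genuine gap is precisely the point you flag at the end and do not close: the permanence lower bound. Your fluctuation argument at a minimizing sequence gives $C^M m^2 \ge (A^L+\sigma)m$, hence $m \ge (A^L+\sigma)/C^M$ \emph{only} under the a priori assumption $m:=\liminf_{t\to\infty}y(t)>0$. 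Your sketch for excluding $m=0$ does not work as stated: from $y(t_n)\to 0$, $\dot y(t_n)\to 0$ you indeed get $y(t_n-h)\to 0$, but to iterate you would need derivative information at the times $t_n-h$, which the fluctuation lemma does not supply (these are not minimizing times), so the ``iterated decay'' terminates after one step and nothing obtained so far excludes a solution oscillating with $\liminf=0$ and $\limsup>0$. Ruling out that collapse is the actual content of a persistence proof and requires a real mechanism --- for instance, that the linearization at zero, $\dot z = \sigma z + A^L z(t-h)$, has a positive real characteristic root when $A^L+\sigma>0$, combined with cooperativity of the equation, so that $0$ is a repeller. (A smaller omission of the same kind: the upper fluctuation bound presupposes $\limsup y<\infty$, which needs a preliminary running-maximum argument.)

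The paper avoids this issue entirely by monotonicity: since $A(t)>0$, equation (\ref{bez1}) is quasimonotone (cooperative), so by the comparison theorem for such delay equations (\cite{HS}, p.~79) its solutions are sandwiched between solutions of the autonomous equations (\ref{bez1L}) and (\ref{bez1M}) with the same initial function; by \cite{K} those equations have unique positive asymptotically stable equilibria $(A^L+\sigma)/C^M$ and $(A^M+\sigma)/C^L$, which yields both bounds in (\ref{insu}) at once, including the strictly positive lower bound. So either add the repeller argument to your route, or adopt the comparison step. Two further remarks. First, your Lyapunov functional for the attractivity part is a legitimate, more self-contained substitute for the paper's citation of \cite{HL} (p.~111) on uniform asymptotic stability of the difference equation (\ref{zn3}); but integrating $D^+V\le -c\,|w(t)|$ only gives $w\to 0$ (attractivity), whereas Theorem 2 requires \emph{uniform asymptotic stability} of the bounded solution --- you should extract UAS from the functional by the standard Lyapunov--Krasovskii theorem rather than assert it. Second, you take strict positivity of $x_m(t)$ for $t>0$ for granted; the paper proves it first (comparison with $\dot u=-\beta(t)u^2$, $u(t_k+0)=(1+d_k)u(t_k)$, stepping over intervals of length $h$), and without it the phrase ``positive permanent solutions'' in your attractivity argument is unsupported.
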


\begin{proof}
First, we prove that $x_m(t,\varphi) > 0, t > 0$ if
$\psi_m(\theta) \ge 0, \ \theta \in [-h, 0], \ \psi_m(0) > 0.$
Really, if $t \in [0,h]$ then equation (\ref{sst2}), (\ref{sst3}) has form
\begin{eqnarray}
& & \dot x_m(t) = \alpha(t-h)e^{-\int_{t-h}^t \gamma(s)ds} \psi_m(t - h)  - \beta(t) x_m^2(t), \label{od5} \\[2mm]
& & x_m(t_k+0) = (1 + d_k)x_m(t_k). \label{od6}
\end{eqnarray}
The solution of equation (\ref{od5}) - (\ref{od6}) with initial value $x_m(0) = \psi_m(0) > 0$
is estimated from below by corresponding solution of the equation
\begin{eqnarray*}
& & \dot u(t) = - \beta(t)u^2(t), \ t \not= t_k, \\
& & u(t_k+0) = (1 + d_k)u(t_k).
\end{eqnarray*}
The solution of last equation is strictly positive for $t \in (0,h]$ since $u(0) = \varphi(0) > 0$ and $(1 + d_k) > 0.$
Analogously, considering equation on intervals $[h,2h], [2h,3h],...,$ we obtain positiveness for all $t > 0.$

To prove  $x_i(t) > 0, t > 0,$ we use the following argument.
The number of immatures which was born at time $s$ and survived to time $t$
is given by $\alpha(s)x_m(s) e^{-\int_s^t \gamma(\xi)d\xi}.$
Since $t-s \le h,$ then
\begin{eqnarray} \label{x1}
x_i(t) = \int_{t-h}^{t} \alpha(s)x_m(s) e^{-\int_s^t \gamma(\xi)d\xi} ds. 
\end{eqnarray}
By (\ref{x1}) we have  $x_i(t) > 0$ for all $t \ge 0$ since $x_m(s) > 0.$

We make change of variables
\begin{eqnarray} \label{change}
x_m(t) = \omega(t)v(t) = \prod_{0 \le t_k < t}(1+ d_k)e^{-\sigma t}v(t)
\end{eqnarray}
at the equation (\ref{sst2}) - (\ref{sst3}) and obtain the following
equation without impulses
\begin{eqnarray}
\dot v(t) = A(t)v(t-h) + \sigma v(t) -C(t)v^2(t). \label{bez1}
\end{eqnarray}
Solutions of equation (\ref{bez1}) are continuous with left continuous derivatives.

Parallel with (\ref{bez1}) we consider two equations
\begin{eqnarray}
& & \dot v_L(t) = A^L v_L(t-h) + \sigma v_L(t) -C^M v_L^2(t), \label{bez1L} \\
& & \dot v_M(t) = A^M v_M(t-h) + \sigma v_M(t) -C^L v_M^2(t), \label{bez1M}.
\end{eqnarray}
Let $v(t,\varphi), v_L(t,\varphi)$ and $v_M(t,\varphi)$ be solutions of equations
(\ref{bez1}), (\ref{bez1L}) and (\ref{bez1M}) respectively with the same initial function. By \cite{HS}, p. 79, they satisfy
inequalities
\begin{eqnarray}
v_L(t,\varphi) \le v(t,\varphi) \le v_M(t,\varphi), \ t \ge 0.
\end{eqnarray}

By \cite{K}, equation (\ref{bez1L}) has unique positive asymptotically stable equilibrium
$\bar m_0 = (A^L + \sigma)/C^M$ (if $A^L > - \sigma$). Analogously, equation (\ref{bez1M}) has unique positive asymptotically stable equilibrium
$ \bar M_0 = (A^M + \sigma)/C^L.$ Therefore,
\begin{eqnarray} \label{insu}
\liminf_{t \to \infty} v(t,\varphi) \ge \frac{A^L + \sigma}{C^M}, \ \limsup_{t \to \infty} v(t,\varphi) \le \frac{A^M + \sigma}{C^L}.
\end{eqnarray}
The permanence of equation (\ref{sst1}) follows from (\ref{x1}) directly.

Now we prove uniform asymptotic stability of solutions of (\ref{bez1}). We consider two solutions $x(t)$ and $y(t)$
such that $x(t) \ge m_0-\delta$ and $y(t) \ge \bar m_0 - \delta$ for all $t \ge 0,$ where $\delta$ is some small positive constant.
The difference $w(t) = x(t) - y(t)$ satisfies linear equation
\begin{eqnarray} \label{zn3}
\frac{d}{dt}w(t) = A(t)w(t-h) - w(t)\left(C(t)(x(t) + y(t)) -\sigma\right).
\end{eqnarray}
By \cite{HL}, p. 111, equation (\ref{zn3}) is uniformly asymptotically stable if
$$A^M < \inf_t \left(C(t)(x(t) + y(t)) - \sigma \right).$$
Using (\ref{insu}), we obtain (\ref{od444}). By Theorem 2, equation (\ref{sst2}), (\ref{sst3}) has unique
positive piece-wise continuous almost periodic solution which is globally attractive.
By (\ref{x1}), we verify W-almost periodicity of $x_i(t)$ if $x_m(t)$ is W-almost periodic.

\end{proof}

\end{document}